\documentclass[12pt]{amsart}
\usepackage[a4paper, bottom=30mm, left=30mm, right=30mm, top=30mm]{geometry}
\usepackage{graphicx}
\usepackage{amsmath}
\usepackage{amsxtra}
\usepackage{amscd}
\usepackage{amsthm}
\usepackage{amsfonts}
\usepackage{amssymb}
\usepackage{eucal}
\usepackage{color}
\usepackage[numeric]{amsrefs}
\usepackage[all]{xy}
\usepackage[colorlinks=true, pdfstartview=FitV, linkcolor=blue, citecolor=blue, urlcolor=blue]{hyperref}
\input colordvi


\newcommand{\nc}{\newcommand}
\newcommand{\rc}{\renewcommand}

\newcommand{\mf}[1]{{\mathfrak{#1}}}
\newcommand{\mc}[1]{{\mathcal{#1}}}
\newcommand{\mb}[1]{{\mathbb{#1}}}
\newcommand{\mrm}[1]{{\mathrm{#1}}}

\newcommand{\bslash}{
  \mathchoice
    {\reflectbox{$\displaystyle{/}$}}
    {\reflectbox{$\textstyle{/}$}}
    {\reflectbox{$\scriptstyle{/}$}}
    {\reflectbox{$\scriptscriptstyle{/}$}}}

\nc{\nl}{\newline}

\makeatletter
\newcommand*\bigcdot@[2]{\mathbin{\vcenter{\hbox{\scalebox{#2}{$\m@th#1\bullet$}}}}}
\newcommand*\bigcdot{\mathpalette\bigcdot@{.5}}
\makeatother

\theoremstyle{plain}
\newtheorem{thm}{Theorem}[section]
\newtheorem*{thm*}{Theorem} 
\newtheorem{prop}[thm]{Proposition}
\newtheorem{lem}[thm]{Lemma}

\numberwithin{equation}{section}


\DeclareMathOperator{\spec}{Spec}

\DeclareMathOperator{\Sym}{Sym}

\nc{\on}{\operatorname}


\nc{\al}{{\alpha }}
\nc{\be}{{\beta }}
\nc{\ga}{{\gamma }}
\nc{\de}{{\delta }}
\nc{\del}{{\partial }}
\nc{\ep}{{\varepsilon }}
\nc{\vap}{{\epsilon }}
\nc{\rh}{{\rho}}

\nc{\ze}{{\zeta }}
\nc{\et}{{\eta }}
\rc{\th}{{\theta }}

\nc{\vth}{{\vartheta }}

\nc{\io}{{\iota }}
\nc{\ka}{{\kappa }}
\nc{\la}{{\lambda }}
\nc{\vrho}{{\varrho}}
\nc{\si}{{\sigma }}
\nc{\ups}{{\upsilon }}
\nc{\vphi}{{\varphi }}
\nc{\om}{{\omega }}

\nc{\Ga}{{\Gamma }}
\nc{\De}{{\Delta }}
\nc{\nab}{{\nabla}}
\nc{\Th}{{\Theta }}
\nc{\La}{{\Lambda }}
\nc{\Si}{{\Sigma }}
\nc{\Ups}{{\Upsilon }}
\nc{\Om}{{\Omega }}


\renewcommand\si{\sigma}
\renewcommand\l{\lambda}

\renewcommand\th{\theta}

\nc{\fA}{{\mathfrak A}}
\nc{\fB}{{\mathfrak B}}
\nc{\fC}{{\mathfrak C}}
\nc{\fD}{{\mathfrak D}}
\nc{\fE}{{\mathfrak E}}
\nc{\fF}{{\mathfrak F}}
\nc{\fG}{{\mathfrak G}}
\nc{\fH}{{\mathfrak H}}
\nc{\fI}{{\mathfrak I}}
\nc{\fJ}{{\mathfrak J}}
\nc{\fK}{{\mathfrak K}}
\nc{\fL}{{\mathfrak L}}
\nc{\fM}{{\mathfrak M}}
\nc{\fN}{{\mathfrak N}}
\nc{\fO}{{\mathfrak O}}
\nc{\fP}{{\mathfrak P}}
\nc{\fQ}{{\mathfrak Q}}
\nc{\fR}{{\mathfrak R}}
\nc{\fS}{{\mathfrak S}}
\nc{\fT}{{\mathfrak T}}
\nc{\fU}{{\mathfrak U}}
\nc{\fV}{{\mathfrak V}}
\nc{\fW}{{\mathfrak W}}
\nc{\fZ}{{\mathfrak Z}}
\nc{\fX}{{\mathfrak X}}
\nc{\fY}{{\mathfrak Y}}
\nc{\fa}{{\mathfrak a}}
\nc{\fb}{{\mathfrak b}}
\nc{\fc}{{\mathfrak c}}
\nc{\fd}{{\mathfrak d}}
\nc{\fe}{{\mathfrak e}}
\nc{\ff}{{\mathfrak f}}
\nc{\fg}{{\mathfrak g}}
\nc{\fh}{{\mathfrak h}}
\nc{\fiI}{{\mathfrak i}}  
\nc{\ffi}{{\mathfrak i}}  
\nc{\fj}{{\mathfrak j}}
\nc{\fk}{{\mathfrak k}}
\nc{\fl}{{\mathfrak{l}}}
\nc{\fm}{{\mathfrak m}}
\nc{\fn}{{\mathfrak n}}
\nc{\fo}{{\mathfrak o}}
\nc{\fp}{{\mathfrak p}}
\nc{\fq}{{\mathfrak q}}
\nc{\fr}{{\mathfrak r}}
\nc{\fs}{{\mathfrak s}}
\nc{\ft}{{\mathfrak t}}
\nc{\fu}{{\mathfrak u}}
\nc{\fv}{{\mathfrak v}}
\nc{\fw}{{\mathfrak w}}
\nc{\fz}{{\mathfrak z}}
\nc{\fx}{{\mathfrak x}}
\nc{\fy}{{\mathfrak y}}


\nc{\bA}{{\mathbb A}}
\nc{\bB}{{\mathbb B}}
\nc{\bC}{{\mathbb C}}
\nc{\bD}{{\mathbb D}}
\nc{\bE}{{\mathbb E}}
\nc{\bF}{{\mathbb F}}
\nc{\bG}{{\mathbb G}}
\nc{\bH}{{\mathbb H}}
\nc{\bI}{{\mathbb I}}
\nc{\bJ}{{\mathbb J}}
\nc{\bK}{{\mathbb K}}
\nc{\bL}{{\mathbb L}}
\nc{\bM}{{\mathbb M}}
\nc{\bN}{{\mathbb N}}
\nc{\bO}{{\mathbb O}}
\nc{\bP}{{\mathbb P}}
\nc{\bQ}{{\mathbb Q}}
\nc{\bR}{{\mathbb R}}
\nc{\bS}{{\mathbb S}}
\nc{\bT}{{\mathbb T}}
\nc{\bU}{{\mathbb U}}
\nc{\bV}{{\mathbb V}}
\nc{\bW}{{\mathbb W}}
\nc{\bZ}{{\mathbb Z}}
\nc{\bX}{{\mathbb X}}
\nc{\bY}{{\mathbb Y}}



\nc{\cA}{{\mathcal A}}
\nc{\cB}{{\mathcal B}}
\nc{\cC}{{\mathcal C}}
\nc{\cD}{{\mathcal D}}
\nc{\cE}{{\mathcal E}}
\nc{\cF}{{\mathcal F}}
\nc{\cH}{{\mathcal H}}
\nc{\cI}{{\mathcal I}}
\nc{\cJ}{{\mathcal J}}
\nc{\cK}{{\mathcal K}}
\nc{\cL}{{\mathcal L}}
\nc{\cM}{{\mathcal M}}
\nc{\cN}{{\mathcal N}}
\nc{\cO}{{\mathcal O}}
\nc{\cP}{{\mathcal P}}
\nc{\cQ}{{\mathcal Q}}
\nc{\cR}{{\mathcal R}}
\nc{\cS}{{\mathcal S}}
\nc{\cT}{{\mathcal T}}
\nc{\cU}{{\mathcal U}}
\nc{\cV}{{\mathcal V}}
\nc{\cW}{{\mathcal W}}
\nc{\cZ}{{\mathcal Z}}
\nc{\cX}{{\mathcal X}}
\nc{\cY}{{\mathcal Y}}


\newcommand\codim{\operatorname{codim}}


\newcommand{\oh}{\operatorname{H}}

\newcommand{\Hom}{\operatorname{Hom}}

\newcommand{\gr}{\operatorname{gr}}

\newcommand{\HC}{\operatorname{HC}}
\newcommand{\HCH}{\operatorname{MHM}}

\renewcommand{\codim}{\operatorname{codim}}

\nc{\crhom}{{\operatorname{R}\cH\mathit o\mathit m}}

\newcommand{\gobble}[1]{}
  \newcommand{\rangeref}[2]{%
    \ref{#1}--\afterassignment\gobble\fam 0\ref{#2}%
  }

\begin{document}

\title{Hodge filtrations on tempered Hodge modules}

\date{March 2023}

\author{Dougal Davis}\address[DD]{School of Mathematics and Statistics, University of Melbourne, VIC 3010, Australia}
\email{dougal.davis1@unimelb.edu.au}
\thanks{DD was supported by the EPSRC programme grant EP/R034826/1 and the ARC grant FL200100141}

\author{Kari Vilonen}\address[KV]{School of Mathematics and Statistics, University of Melbourne, VIC 3010, Australia, also Department of Mathematics and Statistics, University of Helsinki, Helsinki, Finland}
\email{kari.vilonen@unimelb.edu.au, kari.vilonen@helsinki.fi}
\thanks{KV was supported in part by the ARC grants DP180101445,  FL200100141 and the Academy of Finland}

\subjclass[2020]{14F10; 22E46; 32S35}

\begin{abstract}
We show that the Hodge filtration of a tempered Hodge module is generated by the lowest piece of its Hodge filtration. As a consequence, we prove the main conjecture of~\cite{schmid-vilonen} in the special case of tempered representations of real reductive Lie groups.
\end{abstract}

\maketitle

\section{Introduction}

This paper forms part of an ongoing series that aims to study unitary representations of real reductive groups using tools from the theory of mixed Hodge modules in algebraic geometry, implementing a program initiated by Schmid and the second author \cite{schmid-vilonen}.

Let $G_\mb{R}$ be a real reductive group, i.e., the Lie group of real points of a reductive algebraic group defined over $\mb{R}$. A classical open problem in representation theory is to determine the set of unitary irreducible representations of $G_\mb{R}$, often called the unitary dual. This problem has turned out to be remarkably difficult. While there are solutions in certain special cases, such as for complex classical groups \cite{barbasch} using ad hoc methods and for real exceptional groups \cite{atlas, ALTV} using computer calculations, a completely satisfactory description of the unitary dual for general $G_\mb{R}$ is yet to be found.

The program proposed in \cite{schmid-vilonen} aims to place this problem in a more conceptual context. The starting point is the localization theory of Beilinson and Bernstein \cite{BB1}, which relates representations of $G_\mb{R}$ to certain geometric gadgets, called twisted $\mc{D}$-modules, defined over the flag variety of the complexification $G$ of $G_\mb{R}$. This is now a standard tool in representation theory: for example, it was used to great effect by Beilinson and Bernstein in their proof of the Jantzen conjectures \cite{BB}, and by Lusztig and Vogan to give character formulas for irreducible representations \cite{LV} of real groups. The main idea of the present program is that natural enhancements of the geometric side, provided by the theory of mixed Hodge modules, should yield information about unitarity.

The precise connection between Hodge modules and unitarity is as follows. First, Beilinson-Bernstein localization implies that any irreducible representation of $G_\mb{R}$, in its realization as a Harish-Chandra module, arises as the global sections of an equivariant twisted $\mc{D}$-module on the flag variety of $G$. We will refer to this twisted $\mc{D}$-module as the corresponding Harish-Chandra sheaf. As long as the infinitesimal character is real (a minor restriction), the general theory of mixed Hodge modules endows the irreducible Harish-Chandra sheaf with a canonical pure Hodge module structure. This in turn endows the Harish-Chandra module with a filtration (the Hodge filtration) and a Hermitian form (the polarization), both of geometric origin. The main conjecture of \cite{schmid-vilonen} is that these together form a polarized Hodge structure on the Harish-Chandra module\footnote{In fact, this was conjectured to hold for all pure Hodge modules, not just in the Harish-Chandra case.}; in particular, this would imply that the signature of the polarization, and hence unitarity, can be read off from the Hodge filtration.

Since the Hodge filtrations on mixed Hodge modules satisfy many excellent properties, it is expected that this approach to unitarity would be much more tractable than dealing with Hermitian forms directly. In particular, it is hoped that the good functoriality of the Hodge filtration will reveal new structural features of the unitary dual not accessible by other methods.

In a previous paper \cite{DV}, we made the first steps towards implementing this program, confirming that results about unitarity can indeed be obtained from mixed Hodge modules by deriving a key theorem in \cite{ALTV} as a consequence of a stronger result.

In this paper, we study the detailed structure of the Hodge filtration in the special case of tempered Harish-Chandra sheaves. A tempered Harish-Chandra sheaf is one corresponding to a tempered representation. Tempered representations, which are defined by integrability properties of their matrix coefficients, are known to be unitary on general grounds. They play a fundamental role in the representation theory of real groups, especially in the unitarity algorithm of \cite{ALTV}. While the Hodge filtration on a general Harish-Chandra sheaf can be complicated, we prove the following result (Theorem \ref{main} in the main body of the paper) in this special case.

\begin{thm*}
The Hodge filtration of a tempered Hodge module is generated by the lowest piece of its Hodge filtration as a filtered $\cD$-module. 
\end{thm*}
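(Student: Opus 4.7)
The plan is to combine the Langlands--Knapp--Zuckerman classification of tempered representations with the functoriality of Hodge filtrations under Saito's direct images. By that classification, every irreducible tempered Harish-Chandra module of $G_\mb{R}$ is a direct summand of a representation unitarily induced from a (limit of) discrete series of a real Levi subgroup. Via Beilinson--Bernstein localization, this translates into the geometric statement that every irreducible tempered Hodge module on the flag variety $X$ is a direct summand of a Hodge module of the form $\pi_* \mathcal{N}$, where $\pi$ is the pushforward associated to real parabolic induction and $\mathcal{N}$ is a discrete series Hodge module on the flag variety $X_L$ of a Levi $L$. Since strictness of the Hodge filtration on pure Hodge modules forces compatibility with direct sum decompositions, it suffices to prove the generation property for $\pi_*\mathcal{N}$.

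For the discrete series base case, $\mathcal{N}$ is supported on a closed $K_L$-orbit $Q \subset X_L$ and corresponds, under Kashiwara's equivalence, to a $K_L$-equivariant polarized variation of Hodge structure $\mathcal{V}$ on $Q$ concentrated in a single Hodge bidegree. Saito's formula for the Hodge filtration of a closed-embedding pushforward then shows that $F_\bullet(i_*\mathcal{V})$ is obtained from $\omega_{Q/X_L}\otimes\mathcal{V}$ by iterated application of normal derivations, and hence is manifestly generated in lowest degree.

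The main technical step is then to show that if $\mathcal{N}$ has Hodge filtration generated by its lowest piece, so does $\pi_*\mathcal{N}$. I expect this to be the main obstacle: controlling how generators of the Hodge filtration propagate through a nontrivial direct image of Hodge modules. The key input should be Saito's strict direct image theorem, combined with the explicit geometry of the parabolic induction morphism, to identify $F_{p_0}(\pi_*\mathcal{N})$ with (a shift of) $\pi_* F_{p_0}\mathcal{N}$ and then argue that derivations on $X$ suffice to recover the full filtration on $\pi_*\mathcal{N}$ from this lowest piece. An alternative, possibly cleaner, route would bypass parabolic induction and instead characterize tempered Hodge modules intrinsically as intermediate extensions $j_{!*}\mathcal{V}$ from maximal open $K$-orbits in the support, with $\mathcal{V}$ of minimal Hodge type in a suitable sense; the generation property could then be verified directly for such intermediate extensions using the standard Saito machinery for $j_{!*}$, avoiding the need to track generators through the parabolic induction functor.
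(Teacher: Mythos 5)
Your proposal takes a genuinely different decomposition from the paper, but the route you identify as the "main obstacle" is precisely where the paper does all its real work, and you do not have a handle on it. The paper's decomposition of a tempered sheaf $j_{!*}\gamma$ does not run through real parabolic induction. Instead it uses the $\theta$-stable parabolic $\fq_L$ attached to the orbit $Q$: the closure $\bar Q$ is a closed smooth subvariety of $\cB$ fibering over the closed orbit $S\subset \cB_{Q_L}$ with fiber the flag variety $\cB_L$ of a \emph{split} Levi, and $Q\cap\cB_L$ is the \emph{open} $K_L$-orbit there. So $\cM = i_*\cN$ with $i:\bar Q\hookrightarrow\cB$ closed and $\cN$ restricting fiberwise (non-characteristically) to the tempered spherical principal series of a split group. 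The closed embedding step is trivial (exactly as your closed-orbit base case is). The hard step is the split spherical principal series, which is an intermediate extension from an \emph{open} orbit, not a closed one, and is not a discrete series. The paper handles it by a self-duality argument: the filtration $F'$ induced from $\cD_0\otimes_{U(\fk)}\bC_0$ is shown (via a Koszul resolution and Kostant--Rallis) to be filtered self-dual with Cohen--Macaulay associated graded $\mu^*(\cO_\fp)$, and the crucial input $F'_0\cM\subset F_0\cM$ is the minimal $K$-type theorem from~\cite{DV}. None of this appears in your proposal.

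Concretely, the gap is this: you expect that Saito's strict direct image theorem plus "the explicit geometry of the parabolic induction morphism" will show that generation of the Hodge filtration by the lowest piece is preserved under $\pi_*$. This is false for general smooth proper pushforwards of Hodge modules, and there is no reason to expect it here without a substantive new input --- indeed, the difficulty you would face in $\pi_*$ is exactly the difficulty the paper resolves by the minimal $K$-type theorem and the Kostant--Rallis computation on $\cN\cap\fp$. There is also a more basic issue: real parabolic induction is not implemented by a pushforward of twisted $\cD$-modules along a map of flag varieties in the way your $\pi_*\cN$ suggests; what \emph{is} geometrically available is the $\theta$-stable fibration $\bar p:\bar Q\to S$, and that is what the paper uses. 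Your final paragraph's "alternative, possibly cleaner, route" (intermediate extensions from open orbits, verified directly) is in fact the direction the paper takes, but "minimal Hodge type" by itself does not give the generation statement --- you would still need the self-duality and minimal $K$-type ingredients.
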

In other words, the Hodge filtration on a tempered Hodge module is, in some sense, as simple as possible.

As is explained in section~\ref{thc}, a general tempered Hodge module is a mixture of the Hodge module of a tempered spherical principal series representation of a split group, and a Hodge module associated with a closed orbit in a partial flag variety. In the latter case, the proof follows easily from the definition of push-forward for filtered $\cD$-modules. In the former case a crucial ingredient is the fact that the minimal $K$-types lie in the lowest piece of the Hodge filtration, one of the main results in~\cite{DV}.

As a consequence of the theorem above we obtain
\begin{thm*}
The main conjecture in~\cite{schmid-vilonen} holds in the tempered case.
\end{thm*}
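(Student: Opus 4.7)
The main conjecture of \cite{schmid-vilonen}, specialized to a tempered irreducible Hodge module $M$ on the flag variety, asserts that the Harish-Chandra module $V = \Gamma(X, M)$, equipped with its Hodge filtration $F_\bullet V$ and polarization $S$, forms a polarized Hodge structure. Concretely, this amounts to showing that on each associated graded piece $\operatorname{gr}_p^F V$, the polarization restricts to a form whose sign matches the one predicted by the Weil operator of the putative Hodge structure. Since $V$ is tempered, it is unitary by classical theory, so $S$ is (up to a global sign) positive definite on all of $V$; thus the content of the conjecture here is that the predicted signs are all constant and agree with that of $S$.

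The plan is to reduce this verification to the lowest piece of the Hodge filtration using the first theorem. Since $F_\bullet V = U(\mathfrak{g}) \cdot F_{p_0} V$, the associated graded $\operatorname{gr}^F V$ is cyclic as a $\operatorname{Sym}\,\mathfrak{g}$-module with a single generator concentrated in degree $p_0$. The compatibility of the Hodge module polarization of $M$ with its Hodge filtration then implies, after passing to global sections and taking associated graded, that positivity on $\operatorname{gr}_{p_0}^F V$ propagates to all higher graded pieces through the induced action of $\operatorname{Sym}\,\mathfrak{g}$. In effect, the simplicity of the Hodge filtration guaranteed by the first theorem turns the polarized Hodge structure axioms into a condition on a single finite-dimensional graded piece.

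For the base case at degree $p_0$, I would invoke the main result of \cite{DV}: the minimal $K$-types of a tempered Harish-Chandra module lie in $F_{p_0} V$. Using temperedness and the known description of the polarization on minimal $K$-types, one obtains the correct sign to make $\operatorname{gr}_{p_0}^F V$ a polarized piece of the desired Hodge structure. Combining the three ingredients --- unitarity of tempered representations, cyclic generation from the lowest Hodge piece, and positivity on minimal $K$-types --- then yields the main conjecture in the tempered case.

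The principal obstacle is the propagation step: making precise how positivity on $\operatorname{gr}_{p_0}^F V$ is transferred to higher graded pieces via the cyclic generation. This requires a careful examination of how the polarization of $M$ descends to the associated graded of the Harish-Chandra module and interacts with the Lie algebra action, leveraging the standard formalism of polarized Hodge modules in concert with the simplifying structural input of the first theorem.
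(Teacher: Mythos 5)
There is a genuine gap in your proposal, stemming from a confusion between two different Hermitian forms. The polarization $\Gamma(S)$ coming from the Hodge module is invariant under the \emph{compact} real form $\mf{u}_\mb{R}$, while the form that unitarity of tempered representations makes positive definite is the $\mf{g}_\mb{R}$-invariant form $\langle u, v\rangle := \Gamma(S)(u, \theta v)$, where $\theta$ is the (lifted) involution acting on $\Gamma(j_{!*}\gamma)$. These two forms differ by the twist by $\theta$, so unitarity does \emph{not} make $\Gamma(S)$ itself globally definite. Indeed, the theorem you are trying to prove asserts that $\Gamma(S)$ is $(-1)^{p-c}$-definite on $\mathrm{gr}^F_p$, i.e., the signs \emph{alternate} with the Hodge degree; your reduction to ``constant signs agreeing with $S$'' contradicts the statement itself.

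Consequently, the vague ``propagation of positivity via $\operatorname{Sym}\mathfrak{g}$'' step is not a mechanism that can work: one must instead explain \emph{why} the sign flips as $p$ increases by one. The actual argument is a parity argument. One first shows (using unitarity and the relation $\langle u, v\rangle = \Gamma(S)(u,\theta v)$, together with positivity of $\Gamma(S)$ on the minimal $K$-type from \cite{DV}) that $\Gamma(S)$ is $\epsilon$-definite on the $\epsilon$-eigenspace of $\theta$. Then one shows, and this is where Theorem \ref{main} enters through its concrete consequences (Theorem \ref{thm:split}, the description $\gr^F_\bullet\Gamma = \cO_{\cN\cap\fp}$ as a quotient of $\Sym(\fp)$, and the pushforward formula along $\bar Q \hookrightarrow \cB$), that the $\theta$-eigenvalue on $\gr^F_p$ is determined by the parity of $p-c$: since $\theta$ acts by $-1$ on $\fp$ and on the relevant normal bundle, each step of the filtration flips the $\theta$-eigenvalue. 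Combining these two facts gives the alternating signs. Without introducing $\theta$ and the parity comparison, the argument cannot be completed.

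A secondary issue: you implicitly use the \emph{global} generation $F_\bullet V = U(\fg)\cdot F_{p_0}V$, but Theorem \ref{main} is a local statement about the $\cD_\lambda$-module; the paper explicitly notes that the global statement is known only in the split spherical case (Theorem \ref{thm:split}). The paper's route avoids needing the global generation in full and instead uses the local filtered pushforward formula together with the split case to establish the parity proposition.
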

This result is stated as Theorem~\ref{signature} and we refer the reader there for a more precise statement. In a sequel to this paper, we intend to use it as the base case of an inductive proof of the conjecture for general Harish-Chandra modules.

The authors thank Wilfried Schmid for his contributions to this paper.

\section{Tempered Harish-Chandra sheaves}
\label{thc}

It is convenient for us to follow the basic set up of~\cite{DV}, which we recall here briefly. 
We will work in the context of Harish-Chandra modules. Let us fix a complex reductive group $G$ and an involution $\theta$ of $G$. We write $K=G^\theta$ for the fixed point group. We will always use lower case Gothic letters to denote the corresponding Lie algebras. On the level of Lie algebras we have the Cartan decomposition $\fg = \fk \oplus \fp$ into eigenspaces of $\theta$. If $B$ is a Borel and $N$ is its unipotent radical then $H=B/N$, the universal Cartan, is independent of the choice of $B$ and comes equipped with a canonical root system. In what follows we will always consider the roots in $B$ to be negative. 

We write $\HC(\fg,K)$ for the category of Harish-Chandra modules of the pair $(\fg,K)$, and $\HC(\fg,K)_\l$ for the
full subcategory of Harish-Chandra modules with infinitesimal character $\chi_\l$ associated with $\l\in\fh^*$ under the Harish-Chandra homomorphism. For convenience, we deviate from the notation in~\cite{DV} and use  
Harish-Chandra's convention for the Harish-Chandra homomorphism. In particular, $\HC(\fg,K)_\rho$ contains the trivial representation, and $\l=0$ corresponds to the most singular infinitesimal character and is the center of the action of the Weyl group $W$. 

Let us write $\cB=G/B$ for the flag manifold of $G$. Associated to $\l$ we have the sheaf of twisted differential operators $\cD_\l$ on $\cB$. We write $\HC(\cD_\lambda,K)$ for the category of Harish-Chandra sheaves, i.e., for the category of $K$-equivariant $\cD_\l$-modules on $\cB$. If the parameter $\lambda$ is dominant then, according to Beilinson-Bernstein, each irreducible Harish-Chandra module $M$ is obtained as global sections of a unique irreducible Harish-Chandra sheaf $\cM$. We call an irreducible Harish-Chandra sheaf \emph{tempered} if the associated representation is. The tempered Harish-Chandra modules were first classified in~\cite{KZ1982}. A geometric classification, which we recall below, is given in~\cite{HMSWII}.

An irreducible Harish-Chandra sheaf $\cM$ is an intermediate extension of a rank one equivariant $\lambda$-twisted local system $\gamma$ on a $K$-orbit $Q$, i.e., $\cM=j_{!*}\gamma$ for $j:Q \to \cB$ the inclusion. We call $\cM=j_{!*}\gamma$  \emph{clean} if it coincides with $j_!\gamma$ and hence with $j_*\gamma$. Associated to the orbit $Q$, we may choose, uniquely up to $K$-conjugacy, a $\theta$-stable Cartan $T$ with a fixed point in $Q$. We decompose $\ft$ under $\theta$ into its eigenspaces as $\ft=\fa\oplus\fc$ with $\fa$ being the $(-1)$-eigenspace and $\fc$ the $(+1)$-eigenspace. Identifying $T$ with $H$ via the chosen fixed point, $\gamma$ is specified by a one dimensional Harish-Chandra module for the pair $(\mf{h}, T^\theta)$, i.e., by a pair $\lambda\in\fh^*$, $\Lambda:T^\theta\to \bC^*$ such that $d\Lambda + \rho= \lambda|_\fc$.

We now impose the condition that the infinitesimal character $\l\in\fh^*_\bR$ is real; here $\fh^*_\bR =\bR\otimes_\bZ \mb{X}^*(H)$ for $\mb{X}^*(H)=\Hom(H,\bC^*)$ the character lattice. All other tempered representations are obtained by moving the parameter $\nu = \lambda|_{\mf{a}^*}$ in the imaginary direction. 
Under this condition, the classification states that $\cM$ is tempered if and only if
\begin{equation*}
\la|_\fa = 0 \ \ \  \text{and} \ \ \ \text{$\cM=j_{!*}\gamma$ is clean}\,.
\end{equation*}
Thus, in particular, $d\Lambda = \lambda - \rho$ is integral, and hence cleanness of $\mc{M}$ imposes a further condition on the orbit $Q$: for any complex positive root $\alpha$, the root $\theta\alpha$ must also be positive.

Let us conclude by remarking that orbits $Q$ with the above property have the following simple geometric structure. Let $\operatorname{Z}_\fg(\fc)=\fl\oplus \fc$ and let us write
\begin{equation}
\fv\ = \ \bigoplus_{\substack{\alpha \in \Phi^-(\fg,\ft) \\ \alpha|_\fc\neq 0}}\fg_\alpha\,.
\end{equation}
This gives us a parabolic
\begin{equation*}
\fq_L\ = \ \fl\oplus \fc\oplus \fv\,.
\end{equation*}
Let us first consider the projection
\begin{equation*}
p:\cB \to \cB_{Q_L}\,,
\end{equation*}
where $\cB_{Q_L}$ denotes the generalized flag manifold of parabolics of type $\fq_L$.
Then $\fq_L$ is $\theta$-stable and hence the image of $Q$ in $\cB_{Q_L}$ is closed. The fiber of $p$ is the flag manifold $\cB_L$ of $L$. Furthermore, $\fa$ is the Lie algebra of a maximal torus in $L$ and hence $(L,\theta)$ is split and the orbit $Q\cap \cB_L$ is the open orbit in $\cB_L$.

\section{Tempered Hodge modules}

As  in~\cites{DV,schmid-vilonen}, we work in this section in the context of twisted mixed Hodge modules. In the general set-up in~\cite{DV} we work in the context of complex Hodge theory as in~\cite{SS}, but in the tempered case we treat here working with the mixed Hodge modules of Saito~\cites{S1,S2} would also suffice.

The category $\HC(\cD_\l,K )$ has a mixed Hodge module version $\HCH(\cD_\l,K )$ (denoted by $\mrm{MHM}_{\lambda - \rho}(K \bslash \mc{B})$ in \cite{DV}) if the parameter $\l\in\fh^*_\bR$, which we assume from now on.

An irreducible Harish-Chandra sheaf $\cM = j_{!*}\gamma$ supported on an orbit $Q$ has a unique lift to $\HCH(\cD_\l, K)$ of weight $\dim Q$ with Hodge filtration given by
\begin{equation*}
\begin{aligned}
F_p \gamma \ &= \ \begin{cases}\ 0\  \ &p<0\ ,
\\
\ \gamma \ \ \ &p\geq 0\ .\end{cases}
\end{aligned}
\end{equation*}

Assume that the infinitesimal character $\la$ is dominant and real. We can now state our main result:
\begin{thm}
\label{main}
Let us write $c=\codim Q$. Then the Hodge filtration of an irreducible tempered Harish-Chandra sheaf $j_{!*}\gamma$ supported on $Q$ is generated by $F_cj_{!*}\gamma$. 
\end{thm}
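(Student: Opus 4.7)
The plan is to follow the two-step strategy sketched in the introduction: first, reduce the general tempered case to the split spherical case via closed pushforward from $Z = p^{-1}(p(Q))$; second, invoke the minimal $K$-type result of~\cite{DV}.

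\textbf{Geometric reduction.} With notation as in Section~\ref{thc}, the projection $p:\cB\to\cB_{Q_L}$ sends $Q$ onto a closed orbit, so $Z:=p^{-1}(p(Q))$ is a smooth closed subvariety of $\cB$ of codimension $c=\codim_\cB Q$ containing $Q$ as an open dense subset. Let $i:Z\hookrightarrow\cB$ be the closed embedding and $j_L:Q\hookrightarrow Z$ the open one, so $j=i\circ j_L$. Since $i_+$ is exact and fully faithful, cleanness of $j_{!*}\gamma$ on $\cB$ is equivalent to cleanness of $\cN:=j_{L,!*}\gamma$ on $Z$, and one obtains an identification
\[
j_{!*}\gamma \ = \ i_+ \cN
\]
of twisted mixed Hodge modules. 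The standard closed-pushforward formula for the Hodge filtration shifts indices by $c$ and transports the property ``generated by $F_0$'' into ``generated by $F_c$''; the theorem is thereby reduced to showing that $F_\bullet\cN$ is generated by $F_0\cN$. Since $Z\to p(Q)$ is a smooth fibration with fibers $\cB_L$, and $\cN$ restricts in each fiber to the tempered spherical Hodge module on $\cB_L$ for the split pair $(L,L^\theta)$, and since the property is local and stable under smooth base change, a further fiberwise reduction brings us to the case in which $G$ is split and $Q$ is the open $K$-orbit in $\cB$.

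\textbf{The split spherical case.} Under the dominant real infinitesimal character hypothesis, Beilinson-Bernstein localization converts the desired statement into the global identity
\[
F_p\,\Gamma(\cM) \ = \ U_p(\fg)\cdot F_0\,\Gamma(\cM)\qquad(p\geq 0)\,.
\]
Write $M=\Gamma(\cM)$ and let $V\subset M$ be the spherical vector, i.e., the (one-dimensional) minimal $K$-type. The Harish-Chandra module $M$ is cyclic generated by $V$, and by the main theorem of~\cite{DV} one has $V\subseteq F_0 M$. Griffiths transversality immediately gives the inclusion $U_p(\fg)\cdot F_0 M\subseteq F_p M$; the reverse inclusion should follow by comparing the two good filtrations via their associated graded and exploiting the rigidity provided by the pure Hodge module structure.

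\textbf{Main obstacle.} The heart of the proof is this final comparison in the split case. Griffiths transversality gives $U_p(\fg)\cdot F_0 M\subseteq F_p M$ for free, and~\cite{DV} combined with cyclicity shows that $U(\fg)\cdot F_0 M$ already exhausts $M$; however, ruling out that the Hodge filtration could be strictly finer than the filtration generated by $F_0 M$ is a genuine rigidity statement that does not follow from the filtered $\cD$-module formalism alone. Overcoming it should require combining $V\subseteq F_0 M$ from~\cite{DV} with an explicit analysis of $\cM$ in the split spherical setting: the specific form of $\gamma$ as a clean equivariant rank-one local system on the open $K$-orbit, together with the cleanness of its boundary behaviour, should pin down $F_0\cM$ sharply enough to guarantee the required strictness.
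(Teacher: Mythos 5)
Your geometric reduction is essentially the paper's: push forward $\cN = \tilde{j}_{!*}\gamma$ along the closed embedding $i: \bar Q = p^{-1}(p(Q)) \hookrightarrow \cB$, use the explicit formula for the Hodge filtration under closed pushforward to convert ``generated by $F_0$'' on $\bar Q$ into ``generated by $F_c$'' on $\cB$, and reduce the statement on $\bar Q$ to the split spherical case on $\cB_L$ via a non-characteristic restriction to the fibers of $\bar Q \to p(Q)$. This part of your proposal is sound.

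The split spherical case, however, contains a genuine gap that you yourself flag but do not close. Having observed that $V \subseteq F_0 M$ (from \cite{DV}) gives the inclusion $U_p(\fg)\cdot F_0 M \subseteq F_p M$ and that cyclicity makes the left side a good filtration exhausting $M$, you write that the reverse inclusion ``should follow by comparing the two good filtrations via their associated graded and exploiting the rigidity provided by the pure Hodge module structure.'' But two good filtrations on a $\cD$-module agreeing in a bottom piece need not agree; something extra is needed, and the paper's proof supplies it by a \emph{duality} argument, not by a graded comparison. The key observations are: (i) at infinitesimal character $0$ the sheaf $\cD_0$ is self-opposite, and the clean intermediate extension $\cM$ is self-dual as a pure Hodge module, so $(\cM,F)$ is filtered self-dual; (ii) the candidate filtration $F'$ (induced from $\cD_0$ via $\cM = \cD_0 \otimes_{U(\fk)} \bC_0$) admits an explicit filtered Koszul resolution because $\mu^{-1}(\fp) = T^*_K\cB$ is a complete intersection for split groups, and dualizing that resolution shows $(\cM,F')$ is \emph{also} filtered self-dual with Cohen--Macaulay associated graded $\mu^*(\cO_\fp)$; and (iii) filtered self-duality of both sides upgrades the one-sided inclusion $F'_0 \subseteq F_0$ to $F'_\bullet = F_\bullet$, because applying duality to the filtered morphism $\mathrm{id}: (\cM,F') \to (\cM,F)$ yields the reverse inclusion. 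Your proposal identifies exactly where the rigidity must come from but does not produce the self-duality mechanism, which is the essential new idea of the paper's argument; without it, the ``explicit analysis of $\cM$'' you gesture at in your final paragraph does not get off the ground. Also note in passing that the theorem is a local statement about $F_\bullet \cM$ as a filtered $\cD_\lambda$-module; the paper proves it at the sheaf level, and only separately (in Theorem~\ref{thm:split}) deduces the global consequence about $\Gamma(\cM)$, so phrasing the split-case goal entirely in terms of $F_p\Gamma(\cM)$ is a slight mismatch with what is actually being proved.
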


We remark that Theorem \ref{main} is a local statement about the generation of the Hodge filtration of a tempered Harish-Chandra sheaf as a $\mc{D}_\lambda$-module. In the case of spherical principal series for split groups only, we also prove the global statement that the Hodge filtration of the corresponding tempered Harish-Chandra module is generated by its lowest piece (Theorem \ref{thm:split}), but we do not know this in general.

We also prove the following result, which is the main conjecture of \cite{schmid-vilonen} in the special case of tempered Harish-Chandra sheaves.

\begin{thm}
\label{signature}
In the context of Theorem \ref{main}, let $S$ be the polarization on $j_{!*}\gamma$ and $\Gamma(S)$ the induced Hermitian form on the irreducible tempered $(\fg, K)$-module $V := \Gamma(j_{!*}\gamma)$ (see e.g., \cite[\S 4.3]{DV}). Then the form
\[ \Gamma(S)|_{F_p V \cap (F_{p-1} V)^\perp} \]
is $(-1)^{p - c}$-definite for all $p$.
\end{thm}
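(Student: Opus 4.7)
The plan is to reduce to two basic cases following Section~\ref{thc}: the closed-orbit case in the partial flag variety $\cB_{Q_L}$, and the split spherical principal series case on the Levi fibre $\cB_L$. Via the projection $p : \cB \to \cB_{Q_L}$, the tempered Hodge module $j_{!*}\gamma$ is realised as a pushforward (a clean pushforward along the closed embedding of $p(Q)$ into $\cB_{Q_L}$, combined with the smooth projection) of a tempered spherical Hodge module for the split group $L$. Both the Hodge filtration and the polarization transform functorially under this process, and the closed-orbit direction contributes a pure, essentially unitary factor on which the form is positive definite for trivial reasons. This reduces Theorem~\ref{signature} to the split spherical case.

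In the split spherical case, I would combine Theorem~\ref{main} (generation of the Hodge filtration by $F_c$ at the $\cD_\lambda$-module level) with the main result of~\cite{DV}, which identifies $F_c V$ with the span of the minimal $K$-types of $V$. Unitarity of tempered representations forces $\Gamma(S)$ to be definite on these minimal $K$-types, so after fixing the natural sign convention $\Gamma(S)|_{F_c V}$ is positive definite, establishing the base case $p = c$. For $p > c$, the $\fp$-action shifts the Hodge filtration by one, and a Riemann--Hodge adjointness formula for $X \in \fp$ with respect to $S$ forces the form on $F_p V / F_{p-1} V$ to acquire one sign flip per step. Iterating this produces the predicted $(-1)^{p-c}$-definite behaviour on successive quotients, and non-degeneracy on each graded piece follows from irreducibility together with the non-degeneracy of the polarization at the Hodge module level.

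The main obstacle is the global passage from Theorem~\ref{main}, which is a local statement about $\cD_\lambda$-modules, to the required assertion about global sections of $F_p V$; in the split spherical case this is bridged by the separate global generation result Theorem~\ref{thm:split}. The subtler ingredient is the precise Riemann--Hodge adjointness for the $\fp$-action under $S$, which requires identifying the conjugate Hodge filtration well enough to compute signs. Here the explicit description of tempered split spherical principal series and the $K$-type computations from~\cite{DV} should supply enough structure to carry out the sign bookkeeping and conclude.
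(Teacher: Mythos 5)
The paper's proof and your proposal take genuinely different routes, and the difference matters: your key step is underdeveloped in a way that likely cannot be completed as stated.

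The paper avoids any step-by-step sign tracking via a global parity argument. It constructs an involution $\theta$ on $V = \Gamma(j_{!*}\gamma)$ coming from the involution of $G$, normalized to be the identity on the minimal $K$-type. The crucial observation (Proposition~\ref{prop:theta signature}) is that $\langle u, v\rangle := \Gamma(S)(u, \theta v)$ is the $\fg_\bR$-invariant form, which is positive definite by unitarity of tempered representations; hence $\Gamma(S)$ is $\epsilon$-definite on the $\epsilon$-eigenspace $V^{\epsilon\theta}$. The second half (Proposition~\ref{prop:hodge parity}) shows that the $\theta$-eigenvalue on $\gr^F_p V$ is exactly $(-1)^{p-c}$: in the split case this is immediate from $\gr^F_\bullet V = \cO_{\cN\cap\fp}$ (Theorem~\ref{thm:split}) together with $\theta|_\fp = -1$, and in general it follows from the fibration structure since $\theta$ acts by $-1$ on the normal bundle of the closed orbit. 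Theorem~\ref{signature} is then an immediate consequence.

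Your proposal instead tries to propagate definiteness from $F_c V$ (the minimal $K$-type) degree by degree using the $\fp$-action and a ``Riemann--Hodge adjointness formula.'' This is where the gap lies. $\Gamma(S)$ is $\fu_\bR$-invariant, not $\fg_\bR$-invariant, so an element $X\in\fp$ is not skew-adjoint for $\Gamma(S)$, and it is exactly the translation between the $\fu_\bR$-invariant form and the $\fg_\bR$-invariant form (via $\theta$) that encodes the sign alternation. You acknowledge that the sign computation ``requires identifying the conjugate Hodge filtration well enough,'' but there is no conjugate Hodge filtration visible at the level of the Harish-Chandra module; the only structure available is the polarization and the Hodge filtration, which is precisely why the Schmid--Vilonen conjecture is nontrivial. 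Moreover, the $\fp$-action does not cleanly carry $F_{p-1}\cap(F_{p-2})^\perp$ into $F_p\cap(F_{p-1})^\perp$, so a straightforward induction would also require controlling lower-order components. The paper's $\theta$-eigenspace device resolves both issues simultaneously: it replaces the putative conjugate filtration with the involution $\theta$, whose interaction with the polarization is pinned down by unitarity, and it makes the parity claim a global statement that can be verified on the associated graded as $\cO_{\cN\cap\fp}$. Your reduction to the split case and your use of Theorem~\ref{thm:split} and the minimal $K$-type results from~\cite{DV} are consonant with the paper, but the mechanism for producing the alternating signs is the missing ingredient, and the $\theta$-parity argument is the missing idea.
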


\section{Tempered Hodge modules for split groups}
\label{split}

In this section we consider the special case of a Hodge module $\cM$ associated with a tempered spherical principal series representation of a split group. Such an $\cM$ is an intermediate extension of a local system on the open orbit $Q$. By the considerations of section~\ref{thc}, we know that the infinitesimal character is zero (i.e., $\cM$ is a $\cD_0$-module) and that the extension is clean. Thus, $\cM$ is self dual as a Hodge module and hence as a filtered $\cD_0$-module. We also recall that $\cD_0^{op} = \cD_0$, so the filtered dual is given simply by the filtered derived hom
\[ \mb{D}(\mc{M}, F) = \operatorname{RHom}_{(\mc{D}_0, F)}((\mc{M}, F), (\mc{D}_0, F))[\dim \mc{B}] \]
without the need for a twist by a line bundle.

The corresponding spherical Harish-Chandra module $M=\Gamma(\cB,\cM)$ is isomorphic, as a $(\fg,K)$-module, to $U(\fg)_0 \otimes_{U(\fk)} \bC_0$, where $U(\fg)_0$ is the quotient of $U(\fg)$ through which the center acts by infinitesimal character $\chi_0$ and $\bC_0$ is the trivial representation of $K$. We have a corresponding description of $\cM$ as
\begin{equation*}
\cM \ = \ \cD_0 \otimes_{U(\fk)} \bC_0\,.
\end{equation*}
Using this description, we may view $\cM$ as a filtered module with the filtration induced from $\cD_0$. We denote this filtration by $F'_\bullet \cM$. This filtration is generated by the rank one $\cO_X$-module $F'_0\cM$. Let us identify $\fg$ with $\fg^*$, write $\cN$  for the nilpotent cone in $\fg$ and write $\mu: T^*\mc{B}\to \cN$ for the moment map. We then have 

\begin{thm}
\label{thm:split}
The filtration $F'$ on $\cM$ coincides with the Hodge filtration $F$ and  $\gr^{F}_\bullet\cM = \mu^*(\cO_\fp)$. Furthermore, $\oh^k(\cB,\gr^{F}_\bullet\cM)=0$ for $k>0$ and $\Gamma(\cB,\gr^{F}_\bullet\cM) = \cO_{\cN\cap \fp}$.
\end{thm}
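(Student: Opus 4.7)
The theorem amounts to the equality $F=F'$ (whence $\gr^F\cM = \mu^*\cO_\fp$) plus the cohomology computation. The identification $\gr^{F'}\cM = \mu^*\cO_\fp$ is immediate from the definition: passing to associated gradeds, $\gr^{F'}\cM = \cO_{T^*\cB}\otimes_{\Sym\fk}\bC$, where $\Sym\fk \to \cO_{T^*\cB}$ is the comoment map for the $K$-action on $\cB$; using the invariant form $\fg\cong\fg^*$ and the orthogonality $\fp = \fk^\perp$, this equals $\mu^*\cO_\fp$. Thus the main content is the equality $F=F'$, which we prove in two stages: $F'\subseteq F$, then equality via self-duality.

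For the inclusion $F'\subseteq F$, we invoke the main theorem of \cite{DV}: the minimal $K$-types of tempered Harish-Chandra modules lie in the lowest piece of the Hodge filtration. The spherical vector $1\otimes 1\in M$ spans the unique $K$-fixed line and is the minimal $K$-type, so $1\otimes 1\in F_0 M = \Gamma(\cB, F_0\cM)$. Since $F_0\cM$ is an $\cO_\cB$-coherent submodule of $\cM$, we conclude $F'_0\cM = \cO_\cB\cdot(1\otimes 1)\subseteq F_0\cM$; the inclusion $F'_p\subseteq F_p$ for all $p$ then follows from the compatibility $F_p\cD_0\cdot F_q\cM\subseteq F_{p+q}\cM$.

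Equality is proven via self-duality. By assumption (cleanness plus polarization), $(\cM, F)$ is self-dual: $\mb{D}(\cM, F)\cong(\cM, F)$. We claim $(\cM, F')$ is also self-dual, via the Chevalley-Eilenberg resolution
\[ 0 \to \cD_0\otimes\wedge^{\dim\fk}\fk \to \cdots \to \cD_0\otimes\fk \to \cD_0 \to \cM \to 0, \]
filtered termwise by $F$ on $\cD_0$. This is a free filtered resolution realizing $(\cM, F')$; since $\fk$ is reductive, $\wedge^{\dim\fk}\fk$ is the trivial $\fk$-representation, and for split $G$ one has $\dim\fk=\dim\cB$, so the complex is self-dual with shift matching the Verdier dual $\mb{D}$. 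Dualizing the inclusion $(\cM, F')\hookrightarrow(\cM, F)$ then gives a filtered map $(\cM, F)\to(\cM, F')$ which, on the underlying simple $\cD_0$-module $\cM$, must be a nonzero scalar multiple of the identity by Schur's lemma. This yields $F\subseteq F'$ and hence $F=F'$.

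Finally, the cohomology of $\gr^F\cM = \mu^*\cO_\fp$ is computed via the proper factorization $\mu: T^*\cB\to\cN\hookrightarrow\fg^*$. Hinich's rational-singularity theorem for $\cN$ gives $R\mu_*\cO_{T^*\cB}=\cO_\cN$, and combined with the projection formula and Kostant-Rallis Tor-independence of $\fp$ and $\cN$ in the split case, we obtain $R\mu_*\mu^*\cO_\fp = \cO_{\cN\cap\fp}$ in degree $0$. Since $\fg^*$ is affine, global sections yield both the vanishing of higher cohomology and $\Gamma(\cB,\gr^F\cM)=\cO_{\cN\cap\fp}$. The main obstacle will be the third step: arranging the self-duality of $(\cM, F')$ with precise filtration shifts and weight normalizations so that it matches the Hodge self-duality under the identity on $\cM$, so that dualizing the forward filtration inclusion yields the reverse one.
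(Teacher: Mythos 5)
Your proposal follows essentially the same route as the paper: (i) the Koszul/Chevalley–Eilenberg resolution of $\cM = \cD_0 \otimes_{U(\fk)} \bC_0$ identifies $\gr^{F'}\cM$ with $\mu^*\cO_\fp$ and exhibits $(\cM,F')$ as filtered self-dual (with the same shift as $(\cM,F)$, since $\dim\fk = \dim\cB$ for split $G$); (ii) the inclusion $F'_0\cM \subseteq F_0\cM$ comes from the theorem in \cite{DV} that the minimal $K$-type lies in $F_0$; (iii) dualizing the inclusion $(\cM,F')\hookrightarrow(\cM,F)$ gives the reverse one. Your cohomology computation is packaged a bit differently — you factor $\mu$ through $\cN$, cite $R\mu_*\cO_{T^*\cB} = \cO_\cN$, and apply the projection formula plus Kostant–Rallis Tor-independence — whereas the paper takes global sections of the Koszul resolution of $\mu^*\cO_\fp$ term by term and recognizes the result as the Koszul resolution of $\cO_{\cN\cap\fp}$. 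These are the same argument; both rest on $H^{>0}(\cB,\cO_{T^*\cB})=0$, the identification $\Gamma(\cB,\cO_{T^*\cB})=\cO_\cN$, and the flatness statements of Kostant and Kostant–Rallis. Your closing caveat about matching the shift in the two self-dualities is exactly the point the paper settles by computing $\bD(\cM,F')=(\cM,F'\{-n\})$ explicitly from the dualized Koszul complex; this does match the Hodge-theoretic self-duality of a pure polarized module of weight $\dim\cB$, so the worry dissolves.
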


The rest of this section is devoted to the proof of this theorem. We begin with
\begin{lem}
The filtered module $(\cM,F')$ is filtered self dual and $\gr^{F'}_\bullet\cM = \mu^*(\cO_\fp)$ is Cohen-Macaulay.
\end{lem}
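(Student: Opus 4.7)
The plan is to realise $\cM$ via a filtered Chevalley--Eilenberg resolution and reduce both claims to the behaviour of its associated graded, which turns out to be a classical Koszul complex on $T^*\cB$.

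Fix a basis $\xi_1, \dots, \xi_m$ of $\fk$, with $m = \dim\fk$, and form the Chevalley--Eilenberg complex
\[
\cK_\bullet \ = \ \cD_0 \otimes_\bC \wedge^\bullet \fk,
\]
whose differential is given on generators by right multiplication by the $\xi_i$ together with the Lie bracket correction. Equip it with the filtration
\[
F_p \cK_k \ = \ F_{p-k}\cD_0 \otimes \wedge^k \fk,
\]
which is preserved by the differential. Its associated graded is the classical Koszul complex $\pi_*\cO_{T^*\cB}\otimes\wedge^\bullet\fk$ associated with the sequence $\sigma(\xi_1),\dots,\sigma(\xi_m)$ of principal symbols; under $\pi_*\cO_{T^*\cB} = \gr\cD_0$, these symbols are precisely the functions $\mu^*\xi_i$ obtained by pulling back $\xi_i \in \fg \cong \fg^*$ along the moment map $\mu$.

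The heart of the argument is to show that $\mu^*\xi_1,\dots,\mu^*\xi_m$ is a regular sequence in $\cO_{T^*\cB}$, equivalently that $\mu^{-1}(\fp) = \mu^{-1}(\fk^\perp)$ has codimension $m$ in $T^*\cB$. For split symmetric pairs one has $\dim\fk = \dim\cB$, and $K$ acts on its open orbit $Q\subset\cB$ with finite stabilisers, so the $K$-moment map $T^*\cB \to \fk^*$ is a submersion on $T^*Q$, forcing $\mu^{-1}(\fp)\cap T^*Q$ to be smooth of pure dimension $\dim\cB$. To rule out larger components supported over the complement one invokes Kostant--Rallis: $\dim(\fp\cap\cN) = \dim\cB$ and a regular nilpotent of $\fg$ can be chosen in $\fp$, so the generic Springer fibre over $\fp\cap\cN$ is zero dimensional. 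Combined with the trivial lower bound from cutting by $m$ equations on a smooth variety, this yields $\dim\mu^{-1}(\fp) = \dim\cB$, so $\mu^{-1}(\fp)$ is a complete intersection in smooth $T^*\cB$, hence Cohen--Macaulay. The graded Koszul complex is then a resolution of $\mu^*\cO_\fp$, and by the standard filtered-versus-graded argument $\cK_\bullet$ is a filtered resolution of $(\cM, F')$, so $\gr^{F'}\cM = \mu^*\cO_\fp$.

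Filtered self-duality follows by dualising this resolution. Each $\cK_k = \cD_0\otimes\wedge^k\fk$ is filtered free, and the Koszul complex pairs with itself via the isomorphisms $\wedge^k\fk^*\cong\wedge^{m-k}\fk\otimes\det\fk^*$; since $\fk$ is reductive, $\det\fk$ is the trivial $K$-character, so no twist appears and one obtains $\mb{D}\cK_\bullet\cong\cK_\bullet$ after an appropriate shift in cohomological degree and filtration. Passing to $\cM$ yields $\mb{D}(\cM, F')\cong(\cM, F')$, matching the conventions for filtered duals of $\cD_0$-modules recalled at the start of the section.

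The principal obstacle is the codimension bound on $\mu^{-1}(\fp)$: one must combine generic freeness of the $K$-action on $T^*\cB$ with Kostant--Rallis-type information on $\fp\cap\cN$ to exclude high-dimensional components sitting over $\cB\setminus Q$. Once the regular-sequence property is established, Cohen--Macaulayness, the identification of the graded and the Koszul self-duality are all essentially formal.
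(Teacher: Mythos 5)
Your overall strategy -- resolve $\cM$ by a filtered Chevalley--Eilenberg (equivalently, Koszul) complex built from $\cD_0$ and $\wedge^\bullet\fk$, pass to the associated graded to identify it with the classical Koszul complex on $T^*\cB$, and then dualize the filtered free resolution to obtain self-duality -- is the same as the paper's. The formal parts (the filtered/graded comparison and the self-duality of the Koszul complex, with $\det\fk$ trivial) are fine and match the paper's proof.

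However, the critical geometric step -- proving that $\mu^{-1}(\fp)$ has codimension $\dim\fk$ in $T^*\cB$, so that the symbols $\mu^*\xi_i$ form a regular sequence -- has a genuine gap as you have argued it. Your submersion argument handles $\mu^{-1}(\fp)\cap T^*Q$ over the open orbit $Q$, and your Kostant--Rallis plus generic-Springer-fiber argument controls the part of $\mu^{-1}(\fp)$ lying over the dense (regular) locus of $\fp\cap\cN$. But a component of $\mu^{-1}(\fp)$ could in principle sit entirely over a proper closed subvariety of $\fp\cap\cN$ where the Springer fiber dimension jumps; semicontinuity gives only a lower bound on fiber dimensions at special points, so bounding the generic fiber does not by itself bound $\dim\mu^{-1}(\fp)$ from above. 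The complete-intersection cut gives only a lower bound $\dim\mu^{-1}(\fp)\geq\dim\cB$, so the two estimates do not close the argument.

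The paper avoids this entirely by observing that $\mu^{-1}(\fp)=T^*_K\cB$, the union of conormal varieties of the (finitely many) $K$-orbits on $\cB$. Each conormal variety is Lagrangian, hence of dimension exactly $\dim\cB$, so the union has pure dimension $\dim\cB$ with no case analysis. (The split hypothesis then enters only to convert this into $\codim=\dim K$.) This is the cleanest route to the upper bound, and you would need it -- or an equivalent nilpotent-orbit dimension estimate valid on \emph{every} stratum, not just the regular one -- to repair your argument.
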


It is perhaps helpful to recall that the associated graded of the Hodge filtration of any Hodge module is Cohen-Macaulay. The Cohen-Macaulay condition is necessary to have a good notion of filtered dual. 

\begin{proof}
We begin with some preparatory statements. Observe that we have $\mu^{-1}(\fp)=T^*_K\cB$, the union of conormal bundles of $K$-orbits. Thus, because the group is split, $\mu^{-1}(\fp)$ is of pure codimension
\begin{equation*}
\codim_{T^*\cB} \mu^{-1}(\fp) = \dim K.
\end{equation*}
We conclude that the scheme $\mu^{-1}(\mf{p})$, being the preimage of a codimension $\dim K$ complete intersection under a map of smooth varieties, is itself a complete intersection and its structure sheaf $\mu^*(\cO_\fp)$ has the Koszul resolution
\begin{equation}
\label{koszul}
\dots \to \cO_{T^*\cB}\{i\}  \otimes \wedge^i \fk \to  \cO_{T^*\cB}\{i-1\} \otimes \wedge^{i-1} \fk \to \dots\to \cO_{T^*\cB}\{1\}  \otimes \fk \to \cO_{T^*\cB} \,,
\end{equation}
where $\{\cdot\}$ denotes a grading shift.

Let us now consider the filtered module $(\cM,F')$. It has the following filtered resolution
\begin{equation}
\label{resolution}
\dots \to \cD_0\{i\}  \otimes \wedge^i \fk \to  \cD_0\{i-1\} \otimes \wedge^{i-1} \fk \to \dots\to \cD_0\{1\}  \otimes \fk \to \cD_0 \,. 
\end{equation}
Here $\cD_0\{i\}$ denotes the filtered module $\cD_0$ with filtration shifted so that it begins in degree $i$. To verify that the complex above is a resolution we pass to the associated graded complex. The graded resolution coincides with~\eqref{koszul}. Thus we conclude that~\eqref{resolution} is a filtered resolution of $(\cM,F')$ and, furthermore that  $\gr^{F'}_\bullet = \mu^*(\cO_\fp)$ 

We now form the filtered dual $\bD(\cM,F')$ by making use of the resolution above. We obtain, writing $n=\dim(X)=\dim \fk$:
\begin{equation*}
\dots \to \cD_0\{-i\}  \otimes \wedge^i \fk^* \to  \dots\to \cD_0\{-n+1\} \otimes \wedge^{n-1} \fk^*\to \cD_0\{-n\} \otimes \wedge^n \fk^* \,. 
\end{equation*}
Thus, we conclude that $\bD(\cM,F')=(\cM,F'\{-n\})$, i.e., that $(\cM,F')$ is filtered self dual. 
\end{proof}

\begin{lem}
If $F'_0\cM \subset F_0\cM$ then $F'_i\cM = F_i\cM$ for all $i$.
\end{lem}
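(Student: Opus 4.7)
The plan is to first establish the inclusion $(\cM,F')\subset(\cM,F)$ directly from the hypothesis, then use filtered self-duality of both sides to dualize and deduce the reverse inclusion.

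For the forward direction, I would note that $F'$ is generated by $F'_0\cM$ as a filtered $\cD_0$-module, so $F'_p\cM = F_p\cD_0\cdot F'_0\cM$. Since $(\cM,F)$ is a filtered $\cD_0$-module, we have $F_p\cD_0\cdot F_0\cM\subset F_p\cM$, and combined with the hypothesis $F'_0\cM\subset F_0\cM$ this yields $F'_p\cM\subset F_p\cM$ for all $p$, i.e., a filtered inclusion $\iota\colon(\cM,F')\hookrightarrow(\cM,F)$.

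For the reverse direction, I would apply the filtered duality functor $\bD$. By the previous lemma, $\bD(\cM,F')\cong(\cM,F'\{-n\})$. Since $\cM$ is the intermediate extension of a local system on the open orbit, $(\cM,F)$ underlies a polarized pure Hodge module of weight $n=\dim Q=\dim\cB$, and its polarization supplies a filtered self-duality $\bD(\cM,F)\cong(\cM,F\{-n\})$ carrying the same shift. Applying $\bD$ to $\iota$ and invoking both self-duality identifications would produce a filtered map $(\cM,F\{-n\})\to(\cM,F'\{-n\})$ whose underlying $\cD_0$-linear map is a nonzero scalar, since it is the dual of the identity on the irreducible $\cD_0$-module $\cM$. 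Being a filtered map with invertible underlying morphism, this forces $F_{p+n}\cM\subset F'_{p+n}\cM$ for all $p$; combined with the first step we obtain $F=F'$.

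The main obstacle will be verifying that the two filtered self-duality isomorphisms indeed carry the same shift $\{-n\}$: one must align the Koszul-based computation of the previous lemma with the shift arising from polarization of a weight-$n$ Hodge module, confirming that the conventions for filtered duality and Tate twist agree on the two sides.
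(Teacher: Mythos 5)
Your proposal is essentially the paper's own proof, just with the two steps spelled out in more detail. The paper writes tersely: the hypothesis gives a filtered morphism $(\cM,F')\to(\cM,F)$, and since both sides are filtered self-dual one obtains a filtered morphism $(\cM,F)\to(\cM,F')$, hence equality. Your forward step (using that $F'$ is generated by $F'_0\cM$ and that $(\cM,F)$ is a filtered $\cD_0$-module) is exactly how the paper's first sentence unwinds, and your dualization step is the paper's second sentence. The one thing you flag as "the main obstacle" — that the two self-duality isomorphisms carry the same shift $\{-n\}$ — is a genuine point the paper leaves implicit, and your resolution is correct: the Koszul computation gives the $\{-n\}$ shift for $(\cM,F')$, while the polarization of the weight-$n$ pure Hodge module (weight $\dim Q=\dim\cB=n$ since $Q$ is open) gives the same shift for $(\cM,F)$. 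With both shifts equal, the dualized map is honestly $(\cM,F)\to(\cM,F')$ and the argument closes as you describe.
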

\begin{proof}
Our hypothesis implies that the identity map is a morphism of filtered modules $(\cM,F') \to (\cM,F)$, i.e., that $F'_i\cM \subset F_i\cM$ for all $i$. As both $(\cM,F')$ and $(\cM,F)$ are self dual filtered modules we obtain a morphism of filtered modules $(\cM,F) \to (\cM,F')$, i.e., that $F_i\cM \subset F'_i\cM$ for all $i$. Thus $F'_i\cM = F_i\cM$ for all $i$.
\end{proof}

\begin{proof}[Proof of Theorem~\ref{thm:split}]
By the above lemmas, it remains to show that $F'_0\cM \subset F_0\cM$. The module $F'_0\cM $ is generated by the minimal $K$-type $\bC_0$. But by~\cite[Theorem 4.5]{DV} the minimal $K$-type lies in $F_0\cM$, so this concludes the proof of the first part of the theorem. 

We will now give a proof of the second part. 
We will show that the global sections of the Koszul resolution \eqref{koszul} give a resolution for $\mc{O}_{\mc{N} \cap \mf{p}}$. Since $\oh^k(\mc{B}, \mc{O}_{T^*\mc{B}}) = 0$ for $k > 0$, this implies the statement.

As the group is split the Cartan subspace $\fa$ of $\fp$ is also a Cartan for $\fg$ and the little Weyl group of $\fa$ is the full Weyl group of $\fg$. Thus the restriction map
\begin{equation}
\label{invariant}
\bC[\fg]^G \xrightarrow {\ \sim \ }  \bC[\fp]^K 
\end{equation}
is an isomorphism. Consider the diagram
\[
\xymatrix{
\mf{p} \ar[r] \ar[d] & \mf{g} \ar[d] \\
\spec \mb{C}[\mf{p}]^K \ar@{=}[r] & \spec \mb{C}[\mf{g}]^G.
}
\]
By Kostant \cite[Theorems 0.1 and 0.2]{K} and Kostant-Rallis~\cite[Theorems 14 and 15]{KR}, the vertical maps are flat and their scheme-theoretic fibers $\mc{N}$ and $\mc{N} \cap \mf{p}$ are reduced complete intersections. We see therefore that
\begin{equation*}
\cO_{\cN\cap\fp} \ =\  \cO_\cN \otimes_{\cO_\fg}^\bL \cO_\fp
\end{equation*}
has the graded Koszul resolution
\begin{equation}
\label{koszul2}
\dots \to \cO_{\mc{N}}\{i\}  \otimes \wedge^i \fk \to  \cO_{\mc{N}}\{i-1\} \otimes \wedge^{i-1} \fk \to \dots\to \cO_{\mc{N}}\{1\}  \otimes \fk \to \cO_{\mc{N}}. \,
\end{equation}
But \eqref{koszul2} is given by global sections of \eqref{koszul} so we are done.
\end{proof}

\section{Proof of Theorem~\ref{main}}
\label{tempered}

Let us now consider a general irreducible tempered Hodge module $\cM$. By the considerations in section~\ref{thc}, we have $\cM=j_{!*}\gamma$ where $\gamma$ is a clean local system on an orbit $Q$ with the special properties specified in that section. Using the notation in section~\ref{thc} we write $S=p(Q)$ for the closed $K$-orbit in the partial flag variety. Then $\bar Q=p^{-1}(S)$ and we have a $K$-equivariant smooth fibration $\bar p: \bar Q \to S$. We further write $i: \bar Q \to \mc{B}$ for the closed embedding and $\tilde j: Q \to \bar Q$ for the open embedding.

Let us start by considering $\cN= \tilde j_{!*}(\gamma)$. Let us consider the fiber $\cB_L$ of $\bar p$ and the restrictions $\cN|_{\cB_L}$ and $\gamma_L = \gamma|_{Q\cap \mc{B}_L}$. As the restriction is non-characteristic, we have $\cN|_{\cB_L}=\tilde j_{!*}\gamma_L$. So, after we adjust the cohomological shift and the weights, the Hodge module $\cN|_{\cB_L}$ is the tempered spherical principal series sheaf considered in section~\ref{split} for $(\fl,K_L)$, and so we know that its Hodge filtration is generated by $F_0\cN|_{\cB_L}$. Thus the same is true for $\cN$ and we conclude that its Hodge filtration is generated by $F_0\cN$.

Let us write $\cI$ for the ideal sheaf of $\bar Q$ in $\cB$. As $\cM=i_*\cN$ and $i$ is an inclusion of a closed smooth subvariety 
\[
\cM = i_*\cN = (\cD_{\cB, \lambda}/\cD_{\cB, \lambda}\cI) \otimes_{\cO_{\bar Q}} \cN\otimes_{\cO_{\bar Q}}\omega_{\bar Q/\cB} \,.
\]
Let us write $c=\codim{Q}$. Then, by the definition of filtered proper push-forwards, we have
\begin{equation*}
 F_p\cM =F_pi_*\cN = \sum_{r+k\leq p-c}   (F_k\mc{D}_{\mc{B}, \lambda}/ F_k\mc{D}_{\mc{B}, \lambda}\,\cI )\otimes_{\cO_{\bar Q}}F_r\cN\otimes_{\cO_{\bar Q}}\omega_{\bar Q/\cB}.
 \end{equation*}
From this formula and the fact that the Hodge filtration of $\cN$ is generated by $F_0\cN$ we conclude that the Hodge filtration of $\cM$ is generated by $F_c\cM$.
 
\section{Proof of Theorem \ref{signature}}

We will prove Theorem \ref{signature} by appealing to the known unitarity of tempered Harish-Chandra modules. We first recall the relationship between compact and non-compact real forms for $G$ and the corresponding invariant Hermitian forms, cf., \cite[\S 12]{ALTV}.

Recall that we have fixed a complex reductive group $G$ with an involution $\theta$. We choose also a compact real form $U_\mb{R} \subset G$ invariant under $\theta$. Denoting by $\sigma_c : G \to G$ the complex conjugation with respect to $U_\mb{R}$, the involution $\theta$ determines another real form $G_\mb{R} = G^{\theta \sigma_c}$. The corresponding real Lie algebras are $\mf{u}_\mb{R} = \mf{g}^{\sigma_c}$ and $\mf{g}_\mb{R} = \mf{g}^{\theta\sigma_c}$. A key property of tempered Harish-Chandra modules is that they always admit positive definite $\mf{g}_\mb{R}$-invariant Hermitian forms.

The link between the $\mf{g}_\mb{R}$-invariant forms and the polarizations is as follows. First, since the flag variety $\mc{B}$ and the universal Cartan $H$ are canonically associated with $G$, the involution $\theta : G \to G$ induces compatible involutions on both. We will write $\delta : H \to H$ for the induced involution on $H$. We remark that $\delta$ preserves the positive roots in $\mb{X}^*(H)$, and if $Q \subset \mc{B}$ is a closed $K$-orbit and $T$ is a $\theta$-stable Cartan with a fixed point in $Q$, then $\delta$ agrees with $\theta|_T$ under the corresponding identification $T \cong H$. The involution $\theta : \mc{B} \to \mc{B}$ also lifts non-canonically to the $H$-torsor $\tilde{\mc{B}}$, intertwining the action of $\delta$ on $H$; we fix such a lift in what follows.

For any $(\mc{D}_\lambda, K)$-module $\mc{M}$, the pullback $\theta^*\mc{M}$ is a $(\mc{D}_{\delta \lambda}, K)$-module, and we have an isomorphism
\[ \Gamma(\mc{M}) \overset{\sim}\to \Gamma(\theta^*\mc{M}) \]
intertwining $\theta : U(\mf{g}) \to U(\mf{g})$. If $\mc{M} = j_{!*}\gamma$ is an irreducible tempered module and $\lambda \in \mf{h}^*_\mb{R}$ is real, then we have $\delta \lambda = \lambda$ and $\theta^*j_{!*}\gamma \cong j_{!*}\gamma$. We will fix such an isomorphism so that the induced map
\[ \theta : \Gamma(j_{!*}\gamma) \to \Gamma(j_{!*}\gamma) \]
is equal to the identity on the (unique) minimal $K$-type. In particular, the above map is an involution.

Suppose now that $\lambda \in \mf{h}^*_\mb{R}$, and $j_{!*}\gamma$ is an irreducible tempered $(\mc{D}_\lambda, K)$-module, equipped with its polarization $S$ as a pure Hodge module. We therefore have the $\mf{u}_\mb{R}$-invariant form $\Gamma(S)$ on $\Gamma(j_{!*}\gamma)$; it follows immediately that the form
\[ \langle u, v \rangle := \Gamma(S)(u, \theta v) \]
is $\mf{g}_\mb{R}$-invariant. Since the tempered representation $\Gamma(j_{!*}\gamma)$ is unitary as a $\mf{g}_\mb{R}$-module and $\Gamma(S)$ is positive definite on the minimal $K$-type by \cite[Theorem 4.3 and Proposition 4.7]{DV}, we deduce the following.

\begin{prop} \label{prop:theta signature}
For $\epsilon = \pm 1$, the polarization $\Gamma(S)$ is $\epsilon$-definite on the $\epsilon$-eigenspace $\Gamma(j_{!*}\gamma)^{\epsilon\theta}$.
\end{prop}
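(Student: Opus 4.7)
The plan is to leverage the known unitarity of tempered Harish-Chandra modules, turning the $\mf{u}_\mb{R}$-invariant polarization into a $\mf{g}_\mb{R}$-invariant form via $\theta$ and identifying the latter with the (up to positive scalar) unique unitary structure on $V := \Gamma(j_{!*}\gamma)$. Concretely, I will argue that the $\mf{g}_\mb{R}$-invariant sesquilinear form $\langle u, v \rangle = \Gamma(S)(u, \theta v)$ defined just before the statement is in fact positive definite, and then read off the signature of $\Gamma(S)$ on the $\theta$-eigenspaces by a direct computation.

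First I would verify that $\langle \cdot, \cdot \rangle$ is actually Hermitian. Since $\theta$ is an involution on $V$, this reduces to checking that $\theta$ is self-adjoint with respect to $\Gamma(S)$, which should follow from the fact that the chosen isomorphism $\theta^* j_{!*}\gamma \cong j_{!*}\gamma$ is compatible with the polarization $S$ up to a sign fixed by the normalization at the minimal $K$-type. Granting this, $\langle \cdot, \cdot \rangle$ is a nonzero $\mf{g}_\mb{R}$-invariant Hermitian form on the irreducible $(\mf{g}, K)$-module $V$. Since $V$ is unitary, any such form is a nonzero real scalar multiple of the positive definite one by Schur's lemma. The sign is pinned down by restricting to the minimal $K$-type: by construction $\theta$ is the identity there, so $\langle u, v \rangle = \Gamma(S)(u, v)$ on that $K$-type, and by \cite[Theorem 4.3 and Proposition 4.7]{DV} this restriction is positive definite. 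Hence $\langle \cdot, \cdot \rangle$ is positive definite on all of $V$.

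With positivity of $\langle \cdot, \cdot \rangle$ in hand, the conclusion is immediate: for $u, v \in \Gamma(j_{!*}\gamma)^{\epsilon\theta}$ we have $\theta v = \epsilon v$ with $\epsilon \in \mb{R}$, so
\[ \epsilon \, \Gamma(S)(u, v) \ =\  \Gamma(S)(u, \theta v) \ =\  \langle u, v \rangle, \]
whence $\Gamma(S)$ is $\epsilon$-definite on $\Gamma(j_{!*}\gamma)^{\epsilon\theta}$. I expect the main obstacle to be the Hermiticity step, i.e., matching the lift of $\theta$ fixed by normalization at the minimal $K$-type with the behavior of $\Gamma(S)$ under the Hodge-theoretic $\theta$-equivariance of $j_{!*}\gamma$; once that compatibility is established, the rest of the argument is just bookkeeping of signs.
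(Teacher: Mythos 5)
Your proof is correct and follows the same route as the paper: pass to the $\mf{g}_\mb{R}$-invariant form $\langle u,v\rangle = \Gamma(S)(u,\theta v)$, use unitarity of the tempered module together with Schur's lemma to identify this as a positive real multiple of the inner product (the sign being pinned down by positivity of $\Gamma(S)$ on the minimal $K$-type, where $\theta=\mrm{id}$), and then read off the sign of $\Gamma(S)$ on each $\theta$-eigenspace. The Hermiticity step you single out as the main obstacle is actually moot: $\langle\cdot,\cdot\rangle$ is sesquilinear and $\mf{g}_\mb{R}$-invariant on an irreducible unitary module, so Schur's lemma already forces $\langle\cdot,\cdot\rangle = c\cdot\langle\cdot,\cdot\rangle_{\mrm{unit}}$ for some $c\in\mb{C}$, and the minimal $K$-type evaluation shows $c>0$, so there is no need to establish separately that $\theta$ is $\Gamma(S)$-self-adjoint.
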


We now claim the following.

\begin{prop} \label{prop:hodge parity}
The associated graded of the Hodge filtration on $\Gamma(j_{!*}\gamma)^\theta$ (resp., $\Gamma(j_{!*}\gamma)^{-\theta}$) is concentrated in degrees $2k + c$ (resp., $2k + 1 + c$) for $k \in \mb{Z}_{\geq 0}$.
\end{prop}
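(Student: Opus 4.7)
My plan is to reduce Proposition~\ref{prop:hodge parity} to two inputs: (i) the module-level generation $\gr^F V = \Sym(\fp) \cdot F_c V$, where $V = \Gamma(j_{!*}\gamma)$, and (ii) the containment $F_c V \subset V^\theta$. Given these, the proposition is immediate: $\theta$ acts as $-1$ on $\fp$, hence as $(-1)^n$ on $\Sym^n(\fp)$, so every element of $\gr^F_{c+n} V$, being the image of an element of $\Sym^n(\fp) \otimes F_c V$, lies in the $(-1)^n$-eigenspace of $\theta$.

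For input (i), I would lift Theorem~\ref{main} to the module level. Applying $\Gamma$ to the sheaf-level surjection
\[ F_{p-c}\cD_\lambda \otimes_{\cO_\cB} F_c j_{!*}\gamma \twoheadrightarrow F_p j_{!*}\gamma, \]
the cohomology vanishing for dominant $\lambda$ (Beilinson-Bernstein) should yield $F_p V = F_{p-c}U(\fg) \cdot F_c V$ as filtered $U(\fg)$-modules. Since each $F_p V$ is $K$-stable, the action of $\fk$ preserves the filtration, and on $\gr^F V$ the degree-one action of $\fg$ therefore factors through $\fg/\fk = \fp$. Combined with PBW this yields the desired module generation $\gr^F V = \Sym(\fp) \cdot F_c V$.

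For input (ii), I would use the explicit description
\[ F_c \cM \ =\ \cO_{\bar Q} \otimes F_0 \cN \otimes \omega_{\bar Q/\cB} \]
obtained in the proof of Theorem~\ref{main}, together with the fact that $F_0 \cN$ restricts to $\cO_{\cB_L}$ on each fiber of $\bar p : \bar Q \to S$ (Theorem~\ref{thm:split}), to identify $F_c V$ with the unique minimal $K$-type of $V$. Since $\theta$ is $K$-equivariant and normalized to act as the identity on the minimal $K$-type, it therefore acts as the identity on $F_c V$.

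The main obstacle is input (i): $\Gamma$ does not commute with $\otimes_{\cO_\cB}$ in general, so passing from sheaf-level generation to module-level generation requires $H^1$-vanishing for the kernel of the sheaf surjection above. This should follow from local freeness of the $F_k \cD_\lambda$ and the coherence of the Hodge filtration on a pure Hodge module, but verifying it requires care. An alternative, if needed, is to argue directly at the sheaf level using the $\theta$-equivariant structure on $j_{!*}\gamma$ and pass to global sections only at the end, combining with Proposition~\ref{prop:theta signature} to upgrade any partial information.
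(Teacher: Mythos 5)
Your approach is genuinely different from the paper's, and the difference matters. The paper never tries to prove module‑level generation of $\gr^F V$ by $\Sym(\fp)$. Instead it works entirely at the sheaf level: it uses the explicit formula for $F_\bullet i_*\cN$ from the proof of Theorem~\ref{main} to identify $\Gamma(\mc B,\gr^F_\bullet j_{!*}\gamma)$ with $\Gamma(S,\mc F\{c\})$, where $\mc F=\Sym(\mc N_{S/\mc B_{Q_L}})\otimes\omega_{S/\mc B_{Q_L}}\otimes \bar p_{\bigcdot}\gr^F_\bullet\tilde j_{!*}\gamma$ lives on the closed orbit $S$. The crucial geometric fact the paper uses is that $S$ is \emph{pointwise} $\theta$-fixed, so $\theta$ acts fibrewise on $\mc F$; the split-case computation on the fibres plus the $-1$-eigenvalue of $\theta$ on the normal bundle then give the parity of $\theta$ on $\mc F$ directly. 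Since $\gr^F_p V$ always injects into $\Gamma(\gr^F_p\mc M)$, this establishes the parity on $\gr^F V$ without any cohomology-vanishing input.

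The gap in your proposal is precisely where you flag it, and you should not expect to resolve it cheaply. Your input~(i) needs the surjectivity of $\Gamma(F_{p-c}\cD_\lambda\otimes F_c\cM)\to\Gamma(F_p\cM)$, i.e.\ $H^1$-vanishing for the kernel of the sheaf-level surjection. The paper explicitly disclaims knowing this: after Theorem~\ref{main} it says the module-level statement (Theorem~\ref{thm:split}) is only known in the split case, ``but we do not know this in general.'' So building the general case of Proposition~\ref{prop:hodge parity} on module-level generation of $\gr^F V$ runs directly into an open problem the authors chose to avoid. Your input~(ii) also needs more than you give: you need $\theta|_{F_cV}=+1$, and justifying that $\Gamma(F_c\cM)$ is exactly the (multiplicity-one) minimal $K$-type in the non-split case is not automatic; the paper sidesteps this too by reading $\theta$'s action off the sheaf $\mc F$ over the pointwise-fixed $S$ rather than off any $K$-type. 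Your sketched alternative (``argue directly at the sheaf level\dots'') is in fact the paper's route, and the key ingredient you would need to add is the pointwise $\theta$-fixedness of the closed orbit $S$, which is what lets one compute the $\theta$-parity on global sections without ever needing $\gr^F V=\Gamma(\gr^F\cM)$.
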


\begin{proof}[Proof of Theorem \ref{signature}]
Follows immediately from Propositions \ref{prop:theta signature} and \ref{prop:hodge parity}.
\end{proof}

\begin{proof}[Proof of Proposition \ref{prop:hodge parity}]
Consider first the case of a tempered spherical principal series for a split group as in section \ref{split}. By Theorem \ref{thm:split}, we have
\[ \gr^F_\bullet \Gamma(j_{!*}\gamma) = \mc{O}_{\mc{N} \cap \mf{p}} \]
is naturally a graded quotient of $\Sym(\mf{p})$. Since $\theta$ acts on $\mf{p}$ with eigenvalue $-1$ by definition the result in this case follows.

For the general tempered case, consider as in section \ref{tempered} the smooth fibration
\[ \bar{p} \colon \bar{Q} \to S\]
and the Hodge module $\tilde{j}_{!*}\gamma$ on $\bar{Q}$. We have
\begin{equation} \label{eq:hodge parity 1}
\Gamma(\mc{B}, \gr^F_\bullet j_{!*}\gamma) = \Gamma(S, \mc{F}\{c\}),
\end{equation}
where
\[ \mc{F} = \Sym(\mc{N}_{S/\mc{B}_{Q_L}}) \otimes \omega_{S/\mc{B}_{Q_L}} \otimes \bar{p}_{\bigcdot} \gr^F_\bullet \tilde{j}_{!*}\gamma,\]
$\{c\}$ denotes a grading shift, and $\bar{p}_{\bigcdot}$ the sheaf-theoretic pushforward. Since $S$ is closed, it is fixed by $\theta$ pointwise, so $\theta$ acts on the sheaf $\mc{F}$. Since $\bar{p}_{\bigcdot} \tilde{j}_{!*}\gamma$ is fiberwise a tempered spherical principal series for $L$, the $+1$ (resp., $-1$) eigenspace of $\gr^F_\bullet \tilde{j}_{!*}\gamma$ is concentrated in only even (resp., odd) degrees as shown above. Moreover, $\theta$ acts on the normal bundle $\mc{N}_{S/\mc{B}_{Q_L}}$ with eigenvalue $-1$, so the above is also true for $\mc{F}$. The proposition now follows by \eqref{eq:hodge parity 1}.
\end{proof}

\end{document}